\newtheorem{thrm}{Theorem}[section]
\newtheorem{lem}[thrm]{Lemma}
\theoremstyle{definition}
\theoremstyle{remark}
\numberwithin{equation}{section}
\def\Z{{\mathbb Z}}
\def\Aut{{\rm Aut}}
\def\Cay{{\rm Cay}}
\begin{document}

\title{Groups with elements of order 8 do not have the DCI property}{}

\author{Ted Dobson}
\address{University of Primorska, UP IAM, Muzejski trg 2, SI-6000 Koper, Slovenia and, University of Primorska, UP FAMNIT, Glagolja\v{s}ka 8, SI-6000 Koper, Slovenia}
\email{ted.dobson@upr.si}

\author{Joy Morris}           
\address{Department of Mathematics and Computer Science\\
University of Lethbridge\\
Lethbridge, AB. T1K 3M4}    
\email{joy.morris@uleth.ca}
\thanks{The second author was supported by the Natural Science and Engineering Research Council of Canada (grant RGPIN-2017-04905).}

\author{Pablo Spiga}
\address{Dipartimento di Matematica e Applicazioni, University of Milano-Bicocca,\newline
Via Cozzi 55, 20125 Milano, Italy}
\email{pablo.spiga@unimib.it}{}

\keywords{CI property, DCI property, Cayley graphs, Cayley digraphs, 2-closed groups, 2-closure}

\subjclass{05C25, 05E18}

\begin{abstract}
Let $k$ be odd, and $n$ an odd multiple of $3$. We prove that $C_k \rtimes C_8$ and $(C_n \times C_3)\rtimes C_8$ do not have the Directed Cayley Isomorphism (DCI) property. When $k$ is also prime, $C_k \rtimes C_8$ had previously been proved to have the Cayley Isomorphism (CI) property. To the best of our knowledge, the groups $C_p \rtimes C_8$ (where $p$ is an odd prime) are only the second known infinite family of groups that have the CI property but do not have the DCI property. This also shows that no group with an element of order $8$ has the DCI property.
\end{abstract}

\maketitle

\section{Introduction}

Let $\Gamma_1=\Cay(G,S)$ be a Cayley digraph on $G$. We say that the digraph $\Gamma_1$ has the Directed Cayley Isomorphism (DCI) property if whenever $\Gamma_2=\Cay(G,T)\cong \Gamma_1$, there is some $\alpha \in \Aut(G)$ such that $S^\alpha=T$. Analogously, we say that the graph $\Gamma$ has the Cayley Isomorphism (CI) property if whenever $\Gamma_2=\Cay(G,T)\cong \Gamma_1$, there is some $\alpha \in \Aut(G)$ such that $S^\alpha=T$. For graphs, we must have $S=S^{-1}$ and $T=T^{-1}$.

We say that the group $G$ has the DCI property if every Cayley digraph on $G$ has the DCI property, and that $G$ has the CI property if every Cayley graph on $G$ has the CI property. Observe that for a group, having the DCI property is stronger than having the CI property. 

In the 1970s, Babai~\cite{Babai} proved a criterion involving only permutation groups that allows us to determine whether or not a group $G$ has the DCI property: this happens if every $2$-closed group has at most one conjugacy class of regular subgroups isomorphic to $G$. 

Throughout this paper, $A$ will denote a group that is isomorphic to either $C_k$ or $C_k \times C_3$, where $k$ is odd. Our main result is the following:

\begin{thrm}\label{main}
Let $A$ be a group that is isomorphic to either $C_k$ or $C_k \times C_3$, where $k$ is odd. Let $R =A \rtimes \langle r\rangle$, where $|r|=8$ and $r^{-1}ar=a^{-1}$ for every $a \in A$.

Then $R$ does not have the DCI property.
\end{thrm}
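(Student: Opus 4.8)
The plan is to invoke Babai's criterion in its contrapositive form: to show that $R$ fails the DCI property it is enough to produce a single $2$-closed permutation group $W$ on the point set $R$ that contains two regular subgroups isomorphic to $R$ lying in distinct $W$-conjugacy classes. I would obtain $W$ as $W=\Aut(\Gamma)$ for an explicit Cayley digraph $\Gamma=\Cay(R,S)$, since such an automorphism group is automatically $2$-closed. The right-regular representation $R_R$ is then one of the two regular subgroups, so the whole problem reduces to exhibiting a second regular subgroup $R'\cong R$ with $R'$ not conjugate to $R_R$ in $W$. On the level of connection sets this is the same as finding a second set $T$ with $\Cay(R,S)\cong\Cay(R,T)$ for which \emph{no} $\alpha\in\Aut(R)$ satisfies $S^\alpha=T$.

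The construction exploits the element of order $8$. Put $z=r^4$; since $r^2$ centralises $A$ and commutes with $r$, the element $z$ is a central involution, and if $M=A\la r^2\ra$ denotes the index-$2$ subgroup of ``even'' elements (those mapping into $\la\bar r^{\,2}\ra$ under $R\to R/A\cong C_8$), then $\la z\ra\tl M\tl R$ mirrors the chain $\la 4\ra<\la 2\ra<C_8$ responsible for $C_8$ being CI but not DCI. I would choose $S$ so that $\Gamma$ is a generalised wreath product with respect to this chain: the part of $S$ lying outside $M$ is taken to be a union of cosets of $\la z\ra$, so that arcs crossing between the two cosets of $M$ cannot detect $\la z\ra$, whereas arcs inside $M$ can. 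This forces $\Aut(\Gamma)$ to contain a wreath-type subgroup acting independently on the two $M$-cosets, and in particular a permutation $\sigma$ that multiplies the elements of selected $\la z\ra$-sub-blocks by $z$. Transporting $R_R$ by $\sigma$ yields the second regular subgroup $R'=\sigma^{-1}R_R\sigma\le W$, and on connection sets it replaces $S$ by a $T$ obtained from $S$ by multiplying certain elements by $z$ in a pattern that is deliberately \emph{not} uniform over whole $M$-cosets.

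The crux, and the step I expect to be the main obstacle, is proving that $R_R$ and $R'$ are genuinely non-conjugate in $W$, equivalently that $S$ and $T$ are inequivalent under $\Aut(R)$. The danger is precisely that a $z$-twist applied uniformly to an entire coset of $M$ \emph{is} an automorphism, namely the central automorphism $g\mapsto gz^{\epsilon(g)}$ where $\epsilon\colon R\to\la z\ra$ has kernel $M$; so the whole point of the non-uniform, block-by-block twist is to land outside the image of $\Aut(R)$. To confirm this I would describe $\Aut(R)$ explicitly: an automorphism is determined by its action on $A$ together with the image of $r$, which must have the form $ar^{\pm1}$. A key leverage point is that the square roots of $z$ in $R$ are essentially only $r^{\pm2}$ — here the oddness of $|A|$ is essential, since $(ar^2)^2=a^2z$ forces $a=1$ — which pins down the image of $\la r^2\ra$ and hence tightly constrains how $\Aut(R)$ may permute the $\la z\ra$-blocks. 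A careful bookkeeping then shows that no $\alpha\in\Aut(R)$ can reproduce the asymmetric twist encoded in $T$, so $S^\alpha\ne T$ for all $\alpha$. Verifying this inequivalence, rather than the mere existence of $R'$, is where the real work lies.

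Finally, I would note that the construction is uniform across the two families. When $A=C_k$ — including the degenerate case $k=1$, which recovers the statement for $C_8$ itself — the argument is exactly as above, and when $A=C_k\times C_3$ the extra cyclic factor of order $3$ is simply carried along inside the odd-order base $A$: it enlarges the group without altering the role of $z$, so the same $S$, $T$, and $\sigma$ work mutatis mutandis. In all cases, the existence in $W=\Aut(\Cay(R,S))$ of the two non-conjugate regular subgroups $R_R$ and $R'$ gives, via Babai's criterion, that $R$ does not have the DCI property.
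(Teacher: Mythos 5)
There is a genuine logical gap at the centre of your construction. You take $\sigma$ to be an element of $W=\Aut(\Gamma)$ (``this forces $\Aut(\Gamma)$ to contain \dots\ in particular a permutation $\sigma$'') and then set $R'=\sigma^{-1}R_R\sigma$. But any subgroup obtained by conjugating $R_R$ by an element of $W$ is, by definition, conjugate to $R_R$ \emph{in} $W$; the non-conjugacy that you identify as ``the crux'' is therefore false for your $R'$, not merely hard to verify. Equivalently, on the connection-set side: if the twist $\sigma$ is an automorphism of $\Cay(R,S)$, then the set $T$ it produces is necessarily Cayley-isomorphic to $S$, so no amount of bookkeeping in $\Aut(R)$ will separate them. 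To make this style of argument work you must instead produce a permutation $\sigma\notin\Aut(\Gamma)$ with $\sigma^{-1}R_R\sigma\le\Aut(\Gamma)$ --- equivalently, two connection sets with $\Cay(R,S)\cong\Cay(R,T)$ but $T\ne S^\alpha$ for all $\alpha\in\Aut(R)$ --- and neither the explicit $S$, nor the verification that the twisted copy of $R_R$ preserves $\Gamma$, nor the $\Aut(R)$-inequivalence is actually carried out in your sketch. (A smaller slip: the image of $r$ under an automorphism can be $ar^j$ for any odd $j$, not only $j=\pm1$, so your description of $\Aut(R)$ is too restrictive.)

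For contrast, the paper avoids digraphs and connection sets entirely. It writes down two explicit regular permutation groups $R_1=\la\tau_1,\rho_1,\rho_2\ra$ and $R_2=\la\tau_2,\rho_1,\rho_2\ra$ isomorphic to $R$, takes $G=\la R_1,R_2\ra$, and observes that $R_1$ has index $2$ in $G$. A short general lemma (Lemma~\ref{index 2}) shows that any permutation group with a regular subgroup of index $2$ is automatically $2$-closed, so $G=G^{(2)}$ and there is no automorphism group of a digraph to compute. Non-conjugacy of $R_1$ and $R_2$ in $G$ then reduces to a one-line Sylow computation: a $G$-conjugate of $\tau_2$ stays inside the Sylow $2$-subgroup of $R_2$, no element of order $8$ of which lies in $\la\tau_1\ra$. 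If you want to repair your approach, the cleanest route is to adopt this ``index $2$'' device rather than trying to control $\Aut(\Cay(R,S))$ for an explicit connection set $S$.
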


This result is not new, but has never been published explicitly. The fact that the groups $A \rtimes C_8$ do not have the DCI property can be deduced from an old result of Babai and Frankl \cite{BF} in which they show that a quotient of a CI group by a characteristic subgroup is CI.  Although the Babai-Frankl proof is written for graphs rather than digraphs, their techniques apply to either situation. Since $C_8$ is not a DCI group and $A$ is the unique maximal subgroup of odd order and therefore characteristic in $A \rtimes C_8$, this means that $A \rtimes C_8$ is not DCI. However, $C_8$ is a CI group, and when $k$ is square-free and not prime, the CI-status of $A \rtimes C_8$ remains open. 

When $k$ is prime (and $A$ is cyclic), the groups $A \rtimes C_8$ have been proved to have the CI property by Li, Lu, and P\'alfy \cite{LLP}. As far as we are aware, this makes $C_p \rtimes C_8$ (where $p$ is an odd prime) only the second known infinite family of groups that have the CI property but do not have the DCI property. The other family is $C_p \rtimes C_3$, where $p$ is prime and $3 \mid p-1$. These groups were shown by the first author to be CI~\cite[Theorem 21]{Ted}.\footnote{A subsequent, self-contained proof of this also appears in~\cite{LLP}.} That they are not DCI groups follows from work by Li showing that they are not $k$-DCI for most values of $k$~\cite[Theorem 1.3]{Li1999}.

It has been unfortunately common for work on the CI problem and the DCI problem to be imprecise in terminology, so that some papers report that a group has the CI property when the proof actually shows the DCI property, or more importantly that a group is not CI when in fact the proof only shows that it is not a DCI group. Some of the confusion that has arisen through this lack of consistency in the terminology has been partially addressed in some recent papers~\cite{MM,JM}. However, those papers do not touch at all on $A \rtimes C_8$. 

There is a description that first appeared in the work of Li, Lu, and P\'alfy \cite{LLP} and has since been updated via \cite{DMS}, of groups that may have the CI property. For many of these, their status remains open. No equivalent description has been published for the DCI property. Groups not matching the description cannot be DCI since they are not CI, but groups that match the description need not be DCI even if they are CI. In the Li-Lu-P\'{a}lfy description (as updated), $A \rtimes C_8$ is the only family of groups that have elements of order $8$, whose CI status is unknown. Since the groups $A\rtimes C_8$ are not in fact DCI groups, there are $2$-closed groups that contain more than one conjugacy class of regular subgroups isomorphic to $A \rtimes C_8$.  This has a significant impact on the proof techniques that may be required to prove that the groups $A\rtimes C_8$ are CI (if, in fact, they are). So it seemed to us important to write this note clarifying what we currently know about these groups. 

We were able to produce a short and direct proof of Theorem~\ref{main}.
Our proof includes a new lemma that may be useful in other contexts, showing that a permutation group that has a regular subgroup of index $2$ is always $2$-closed. 

\section{The proof}

We will define two regular permutation groups $R_1$ and $R_2$ that are isomorphic to $A \rtimes C_8$, and show that $R_1$ and $R_2$ are not conjugate in $G^{(2)}$ (the 2-closure of $G$), where $G=\langle R_1, R_2\rangle$.
Using Babai's criterion \cite{Babai}, this implies that $C_k \rtimes C_8$ does not have the DCI property.

Let the underlying set $\Omega$ of cardinality $8|A|$  be written as $\Z_8 \times \Z_k \times \Z_r$, where $r=1$ if $A\cong C_k$, and $r=3$ if $A \cong C_3 \times C_k$. When adding or subtracting in a coordinate, the operation is performed in $\Z_k$ or $\Z_r$ (as appropriate).
\begin{itemize}
\item Define $\tau_1= ((0\ 1\ 2\ 3\ 4\ 5\ 6\ 7),(1),(1))$;
\item Define $\tau_2= ((0\ 1\ 6\ 7\ 4\ 5\ 2\ 3),(1),(1))$;
\item Define $\rho_1$ by $$(i,j,\ell)^{\rho_1}=\begin{cases}(i,j+1,\ell)& \text{$i$ even}\\(i,j-1,\ell)& \text{$i$ odd.}\end{cases}$$
\item Define $\rho_2$ by $$(i,j,\ell)^{\rho_1}=\begin{cases}(i,j,\ell+1)& \text{$i$ even}\\(i,j,\ell-1)& \text{$i$ odd.}\end{cases}$$
\item Take $R_1=\langle \tau_1, \rho_1, \rho_2\rangle$ and $R_2=\langle \tau_2, \rho_1, \rho_2\rangle$.
\end{itemize}

It is straightforward to calculate that the group $H=\langle \tau_1,\tau_2\rangle$ has order $16$ and any of its elements can be written as $\tau_2^\ell h^\epsilon$, where $\ell \in \Z_8$, $h=\left( (1\ 5)(3\ 7),(1),(1)\right)$, and $\epsilon \in \{0,1\}$. It is  also straightforward to see that $R_1=\langle \tau_1, \rho_1, \rho_2\rangle \cong A \rtimes C_8\cong R_2=\langle \tau_2, \rho_1,\rho_2\rangle$. We need to show two things:
\begin{enumerate}
\item there is no $g \in G$ such that $R_2^g=R_1$; and
\item $G$ is $2$-closed.
\end{enumerate}
We show these in the subsequent results.

%

\begin{lem}\label{not conj}
For $R_1, R_2$ as defined above and $G=\langle R_1, R_2\rangle$, there is no $g \in G$ such that $R_2^g=R_1$.
\end{lem}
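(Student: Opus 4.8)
The plan is to pass to the natural block system on $\Omega$ given by the first coordinate. Let $\mathcal{B}=\{B_0,\dots,B_7\}$ be the partition of $\Omega=\Z_8\times\Z_k\times\Z_r$ into the fibres $B_i=\{i\}\times\Z_k\times\Z_r$. Each generator of $G$ respects $\mathcal{B}$: the permutations $\rho_1,\rho_2$ fix every $B_i$ setwise, since they change only the second and third coordinates, while $\tau_1$ and $\tau_2$ permute the fibres exactly as they permute $\Z_8$. Hence $\mathcal{B}$ is a $G$-invariant block system, and I would introduce the induced homomorphism $\pi\colon G\to\mathrm{Sym}(\Z_8)$ recording the action of $G$ on the eight blocks. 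Because $\tau_1$ and $\tau_2$ act only on the first coordinate, $\pi$ restricts to an isomorphism on $H=\langle\tau_1,\tau_2\rangle$, and I will identify $H$ with $\pi(H)$ throughout.

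Next I would compute the images. Since $\rho_1,\rho_2\in\Ker\pi$, we get $\pi(R_1)=\langle\tau_1\rangle$ and $\pi(R_2)=\langle\tau_2\rangle$, while $\pi(G)=\langle\tau_1,\tau_2\rangle=H$ has order $16$. Now $\tau_1$ and $\tau_2$ are $8$-cycles, so $\langle\tau_1\rangle$ and $\langle\tau_2\rangle$ are cyclic of order $8$ and thus of index $2$ in $H$; in particular both are normal in $H$. They are also distinct: a direct computation gives $\tau_2\tau_1^{-1}=h$, so if $\tau_2$ lay in $\langle\tau_1\rangle$ then $h$ would too, yet the only involution of $\langle\tau_1\rangle$ is $\tau_1^4\ne h$, a contradiction. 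Hence $\tau_2\notin\langle\tau_1\rangle$, so $\pi(R_1)$ and $\pi(R_2)$ are two \emph{different} normal subgroups of $\pi(G)$.

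With this in hand the conclusion is immediate. If some $g\in G$ satisfied $R_2^g=R_1$, then applying $\pi$ would give $\pi(R_2)^{\pi(g)}=\pi(R_1)$. But $\pi(R_2)$ is normal in $\pi(G)$ and $\pi(g)\in\pi(G)$, so $\pi(R_2)^{\pi(g)}=\pi(R_2)$, forcing $\pi(R_2)=\pi(R_1)$ and contradicting their distinctness. Hence no such $g$ exists. The heart of the argument — and the only place where something could go wrong — is the reduction to the quotient: once one observes that the first coordinate furnishes a $G$-invariant block system on which $R_1$ and $R_2$ project onto the two distinct (and, by index $2$, automatically normal) cyclic subgroups of order $8$ in $H$, everything else is routine. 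I therefore expect the only genuine computation to be the verification that $\langle\tau_1\rangle\ne\langle\tau_2\rangle$, which amounts precisely to $\tau_2\tau_1^{-1}=h\notin\langle\tau_1\rangle$.
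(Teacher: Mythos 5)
Your proof is correct, and it takes a genuinely different route from the paper's. The paper argues directly inside $G$: it writes an arbitrary $g\in G$ in the normal form $\tau_2^\ell h^\epsilon\rho_1^i\rho_2^j$, observes that $R_2^g=R_1$ would force $\tau_2^g$ to lie in $\langle\tau_1\rangle$, computes $\tau_2^g=\tau_2^{h^\epsilon\rho_1^i\rho_2^j}$ using $\tau_2^h=\tau_2^5$, and concludes that $\tau_2^g$ is an element of order $8$ of $R_2$, none of which lie in $R_1$. You instead pass to the quotient action on the block system $\mathcal B$ of first-coordinate fibres, under which $R_1$ and $R_2$ project onto the two cyclic subgroups $\langle\tau_1\rangle\ne\langle\tau_2\rangle$ of index $2$ (hence normal) in $\pi(G)=H$, so conjugacy of $R_1$ and $R_2$ in $G$ is impossible. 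Both arguments lean on the same previously established fact that $|H|=16$, and your one genuine computation ($\tau_2\tau_1^{-1}=h\notin\langle\tau_1\rangle$, since the unique involution of $\langle\tau_1\rangle$ is $\tau_1^4$) checks out. What your approach buys is that it sidesteps the explicit conjugation and, in particular, the paper's somewhat compressed final claim that no element of order $8$ of $R_2$ lies in $R_1$; everything is reduced to elementary subgroup structure of the order-$16$ group $H$. What the paper's approach buys is that it stays entirely at the level of explicit permutations without introducing the block system and the induced homomorphism $\pi$, and its normal form for elements of $G$ is reused elsewhere in the setup. Either proof is acceptable; yours is arguably the more conceptual of the two.
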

\begin{proof}
Since $\tau_1$ and $\tau_2$ normalise $\langle\rho_1, \rho_2\rangle$, any element of $G$ can be written as $g=\tau\rho_1^i\rho_2^j$ for some $\tau \in H=\langle \tau_1, \tau_2\rangle$ and some $i \in \Z_k$, $j \in \Z_r$. Additionally, as discussed above, we may write $\tau=\tau_2^\ell h^\epsilon$, where $h=\left( (1\ 5)(3\ 7),(1),(1)\right)$, $\ell \in \Z_8$, and $\epsilon \in \Z_2$.  So $g=\tau_2^\ell h^\epsilon\rho_1^i\rho_2^j $.

Now if $R_2^g=R_1$ we must have $\tau_2^g \in R_1$. Since $\tau_2^g$ has order $8$, this implies $\tau_2^g \in \langle \tau_1\rangle$ (the Sylow $2$-subgroup of $R_1$). 

We have $$\tau_2^g=\tau_2^{\tau_2^\ell h^\epsilon\rho_1^i\rho_2^j}=\tau_2^{h^\epsilon\rho_1^i\rho_2^j}.$$
Now we calculate $\tau_2^h=\tau_2^5 \in R_2$; since $\rho_1, \rho_2 \in R_2$ we must have $\tau_2^g$ being an element of order $8$ in the Sylow $2$-subgroup of $R_2$. But none of these elements is in $R_1$. Therefore for every $g \in G$, $R_2^g \neq R_1$.
\end{proof}

We prove a more general lemma that implies the final piece we need.

\begin{lem}\label{index 2}
Let $G$ be a group with a regular subgroup $R$ of index $2$. Then $G$ is $2$-closed.
\end{lem}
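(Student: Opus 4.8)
The plan is to reduce everything to a computation of the stabiliser of a point. Since $G$ contains the regular subgroup $R$ it is transitive on $\Omega$, and by the orbit--stabiliser theorem the stabiliser $G_\alpha$ of a point $\alpha$ has order $|G|/|\Omega| = 2|R|/|R| = 2$. Because $R$ has index $2$ in $G$ it is normal, so $G$ lies in the normaliser of $R$ in $\mathrm{Sym}(\Omega)$. Identifying $\Omega$ with $R$ so that $R$ acts by right multiplication and $\alpha$ is the identity $1$, this normaliser is the holomorph $R \rtimes \Aut(R)$, whose stabiliser of $1$ is exactly $\Aut(R)$. Hence $G_1 = \langle s\rangle$ for some automorphism $s$ of $R$ with $|s| = 2$, and every element of $G$ has the form $x \mapsto x^{s^\epsilon}a$ for some $a \in R$ and $\epsilon \in \{0,1\}$. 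Since $R \le G \le G^{(2)}$ always holds, it suffices to prove $(G^{(2)})_1 = G_1$: then orbit--stabiliser gives $|G^{(2)}| = |\Omega|\cdot 2 = |G|$, forcing $G^{(2)} = G$.

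Next I would identify the orbitals of $G$. Right multiplication leaves $yx^{-1}$ unchanged while $s$ sends it to $(yx^{-1})^s$, so the $G$-orbit of an ordered pair $(x,y)$ is completely determined by the unordered pair $\{\,yx^{-1},\,(yx^{-1})^s\,\}$. Consequently a permutation $\sigma$ fixing $1$ lies in $(G^{(2)})_1$ if and only if $(y^\sigma)(x^\sigma)^{-1} \in \{\,yx^{-1},\,(yx^{-1})^s\,\}$ for all $x, y \in R$.

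The heart of the argument is then a two-stage use of this condition. Taking $x = 1$ shows $y^\sigma \in \{y, y^s\}$ for every $y$, so $\sigma$ agrees pointwise with either the identity or $s$; write $y^\sigma = y^{s^{\epsilon(y)}}$ with $\epsilon(y)\in\{0,1\}$. Feeding the general condition back in for a pair $x,y$ both moved by $s$, a short case check on the four possibilities for $(\epsilon(x),\epsilon(y))$ rules out $\epsilon(x)\ne\epsilon(y)$ (either choice would force one of $x,y$ to be fixed by $s$), so $\epsilon$ is constant on the set of points moved by $s$; the points fixed by $s$ are fixed by $\sigma$ as well, and $s$ fixes them too. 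Therefore $\sigma$ is the identity everywhere or equals $s$ everywhere, giving $(G^{(2)})_1 = \langle s\rangle = G_1$ and completing the proof.

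The main obstacle I anticipate is the gluing step: the condition at a single base point only pins down the \emph{local} choices $\epsilon(y)$, and it is the two-variable orbital condition that forces these choices to be globally coherent. The reduction to the holomorph form $G = R\rtimes\langle s\rangle$ is standard, but it is what makes the orbital invariant $\{z,z^s\}$ explicit and the case analysis manageable; without it the bookkeeping is the only real difficulty.
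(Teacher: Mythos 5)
Your proof is correct and follows essentially the same route as the paper's: both identify the stabiliser of the base point as $\langle s\rangle$ for an order-$2$ automorphism $s$ of $R$, use the $2$-closure condition on pairs through the base point to get $y^\sigma\in\{y,y^s\}$, and then use the condition on a second family of pairs to glue these local choices into $\sigma=1$ or $\sigma=s$. The only cosmetic differences are that the paper's gluing step uses pairs $(b,a)$ with $b$ fixed and $a$ moved, while yours uses pairs with both points moved by $s$, and that your explicit orbital invariant $\{yx^{-1},(yx^{-1})^s\}$ is just an unpacking of the paper's ``there exists $h\in G$ with $(x,y)^h=(x,y)^z$.''
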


\begin{proof}
Since $R$ is acting regularly, we can label the points of the underlying set with the elements of $R$. Let $z \in G^{(2)}$, with $z \neq 1$. We will show that $z \in G$. Since $R$ is regular, we may multiply $z$ by an element of $R$ if necessary to assume without loss of generality that $1^z=1$.

Since $R$ has index $2$ in $G$, there is some nonidentity element $g \in G$ of order $2$ that acts as an automorphism of $R$ (so fixes $1$), such that $G=R\langle g\rangle$.

Suppose that $a \in R$ with $a^z \neq a$. By the definition of $2$-closure, there is some $h \in G$ such that $(1^h,a^h)=(1,a)^h=(1,a)^z=(1,a^z)$. Since $g$ is the only nontrivial element of $G$ that fixes $1$, we must have $h=g$. Thus $a^z=a^g$.

Now suppose that $b \in R$ with $b^z=b$. Note that there is a unique element of $G$ that fixes $b$ and is not the identity, namely $b^{-1}gb$. By the definition of $2$-closure, there is some $h \in G$ such that $(b,a)^h=(b,a)^z=(b,a^g)\neq (b,a)$, so $h$ fixes $b$ but does not fix $a$. The only nontrivial element of $G$ that fixes $b$ is $b^{-1}gb$, so $h=b^{-1}gb$.

Now $$(b,a^g)=(b,a)^z=(b,a)^{b^{-1}gb}=(b,a^{b^{-1}gb})=(b,(ab^{-1})^gb).$$ Since $g$ is an automorphism of $R$, this is $(b,a^g(b^g)^{-1}b)$. Therefore $(b^g)^{-1}b=1$, implying $b^g=b$. So $b^z=b=b^g$.

We have now shown that for any $c \in R$, whether or not $c^z=c$ we can conclude that $c^z=c^g$. Thus $z=g \in G$. Since $z$ was an arbitrary element of $G^{(2)}$, $G$ is $2$-closed. 
\end{proof}

Putting these together produces our main result.

\begin{proof}[Proof of Theorem~\ref{main}]
Let $\tau_1$, $\tau_2$, $\rho_1$, $\rho_2$, $R_1$, $R_2$, and $G$ be as defined above. Then $G$ contains two regular subgroups ($R_1$ and $R_2$) isomorphic to $R$. By Lemma~\ref{index 2}, $G$ is $2$-closed. By Lemma~\ref{not conj}, $R_1$ and $R_2$ are not conjugate in $G=G^{(2)}$. Therefore by Babai's criterion~\cite{Babai}, $R$ does not have the DCI property.
\end{proof}

\bibliography{References}{}

\begin{thebibliography}{99}

\bibitem{Babai}
L.\,Babai:
Isomorphism problem for a class of point-symmetric structures.
\emph{Acta Math. Acad. Sci. Hungar.} \textbf{29} (1977), no.~3-4, 329--336. 

\bibitem{BF}
L.\,Babai and P.\,Frankl,
Isomorphisms of Cayley graphs~I, 
in A.\,Hajnal and V.\,T.\,Sós, eds.:
\emph{Combinatorics (Proc. Fifth Hungarian Colloq., Keszthely, 1976), Vol. I,} pp. 35--52.
North-Holland, New York, 1978. 

\bibitem{Ted}
E.\,Dobson, Isomorphism problem for metacirculant graphs of order a product of distinct primes. \emph{Can. J. Math.} \textbf{50} (1998), 1176--1188.

\bibitem{DMS} 
E.\,Dobson, M.\,Muzychuk, and P.\,Spiga, Generalised dihedral CI-groups. \emph{Ars Math. Contemp.} \textbf{22} (2022), Paper No.~7, 18 pages. 

\bibitem{Li1999}
C.\,H.\,Li, On finite groups with the Cayley Isomorphism property, II. \emph{J. Combin. Theory--A} \textbf{88} (1999), 19--35.

\bibitem{MM}
D.\,W.\,Morris and J.\,Morris,
Non-Cayley-Isomorphic Cayley graphs from non-Cayley-Isomorphic Cayley digraphs.
\url{https://arxiv.org/abs/2303.04085}.

\bibitem{JM}
J.\,Morris,
$\mathbb Z_3^8$ is not a CI-group.
\emph{Ars. Math. Contemp.} (2024), doi:10.26493/1855-3974.3087.f36 

 \bibitem{LLP}
C.\,H.\,Li, Z.\,P.\,Lu, and P.\,P.\,Pálfy,
Further restrictions on the structure of finite CI-groups.
\emph{J.~Algebraic Combin.} \textbf{26} (2007), no.~2, 161--181. 


\end{thebibliography}
\bibliographystyle{amsplain}

\end{document}